\numberwithin{equation}{section}
\newtheorem{theorem}{Theorem}[section]
\newtheorem{claim}[theorem]{Claim}
\newtheorem{lemma}[theorem]{Lemma}
\newtheorem{question}[theorem]{Question}
\theoremstyle{definition}
\newtheorem{remark}[theorem]{Remark}
\newcommand{\<}{\langle}
\newcommand{\uh}{\upharpoonright}
\renewcommand{\>}{\rangle}
\newcommand{\dom}{\operatorname{dom}}
\newcommand{\RCA}{\operatorname{RCA}_0}
\newcommand{\RT}{\operatorname{RT}}
\newcommand{\COH}{\operatorname{COH}}
\newcommand{\SRT}{\operatorname{SRT}}
\begin{document}

\title{Omitting Cohesive Sets}

\subjclass[2010]{03D28,03D32}

\keywords{Cohesive set; Cohen generic; Martin L\"{o}f random; Reverse mathematics}

\author{Wei Wang}

\thanks{This research is partially supported by NSF Grant 11001281 of China and an NCET grant from the Ministry of Education of China. The author thanks Mingzhong Cai for bringing an interesting question to his attention.}

\address{Institute of Logic and Cognition and Department of Philosophy, Sun Yat-sen University, 135 Xingang Xi Road, Guangzhou 510275, P.R. China}
\email{wwang.cn@gmail.com}

\begin{abstract}
We prove that if $\vec{R}$ is a computable sequence of subsets of $\omega$ which admits no computable cohesive set, then no $3$-generic computes any $\vec{R}$-cohesive set; and there exists a Martin-L\"{o}f random which computes no $\vec{R}$-cohesive set.
\end{abstract}

\ifx\isdraft\undefined
\else
    \today
\fi

\maketitle

\section{Introduction}

For two sets $X$ and $Y$, we write $X \subseteq^* Y$ if $X - Y$ is finite, and write $X =^* Y$ if $X \subseteq^* Y$ and $Y \subseteq^* X$. For $\vec{R} = (R_n: n \in \omega)$ an infinite sequence of subsets of $\omega$, an infinite set $X \subseteq \omega$ is \emph{$\vec{R}$-cohesive} if for all $n$ either $X \subseteq^* R_n$ or $X \subseteq^* \omega - R_n$. In reverse mathematics, the existence of cohesive sets ($\COH$) is an easy consequence of Ramsey Theorem for Pairs ($\RT^2_2$) over $\RCA$. $\COH$ was introduced by Cholak, Jockusch and Slaman \cite{Cholak.Jockusch.ea:2001.Ramsey} and turns out to be a useful principle with rich properties (e.g., see \cite{Cholak.Jockusch.ea:2001.Ramsey} and \cite{Hirschfeldt.Shore:2007}). Cholak, Jockusch and Slaman prove that $\COH$ is strictly weaker than $\RT^2_2$ over $\RCA$ and raise the question whether $\RCA + \SRT^2_2 \vdash \COH$, where $\SRT^2_2$ is the stable version of $\RT^2_2$. This question remains a major open question of the subject.

There have been some attempts to understand the complexity of cohesive sets. Jockusch and Stephan \cite{Jockusch.Stephan:1993.cohesive} construct a primitively computable $\vec{R}$ such that $C' \gg \emptyset'$ for every $\vec{R}$-cohesive $C$. Recall that $X \gg Y$ if $X$ computes a function $f: \omega \to 2$ such that $f(e) \neq \Phi_e(Y;e)$ for all $e$. In the same paper, Jockusch and Stephan also prove that if $X' \gg \emptyset'$ then every computable sequence admits an $X$-computable cohesive set $C$.

From another viewpoint, recently Mingzhong Cai asks the following.

\begin{question}[Cai] \label{qst:Cai}
Suppose that $\vec{R}$ is computable but admits no computable cohesive set. Does there exist a non-computable $X$ which computes \emph{no} $\vec{R}$-cohesive set?
\end{question}

Cai's question can be put in the following way: does there exist a computable $\vec{R}$ such that the Turing degrees of $\vec{R}$-cohesive sets are exactly the non-computable ones?

In this paper, we answer Question \ref{qst:Cai} affirmatively. We show that if $\vec{R}$ is computable and admits no computable cohesive set, then no $3$-generic computes any $\vec{R}$-cohesive set (\S2), and there exists a Martin-L\"{o}f random which computes no $\vec{R}$-cohesive set (\S3).

We close this section by introducing some notation.

We use lower case Greek letters for finite binary strings. For $b_0, b_1, \ldots, b_{n-1} < 2$, we write $\<b_0 b_1 \ldots b_{n-1}\>$ for the string $\sigma$ of length $n$ such that $\sigma(i) = b_i$ for all $i < n$. If $\sigma, \tau \in 2^{<\omega}$, we write $\sigma\tau$ for the concatenation of $\sigma$ and $\tau$, i.e., the string $\eta$ such that $|\eta| = |\sigma| + |\tau|$, $\eta(i) = \sigma(i)$ for $i < |\sigma|$ and $\eta(|\sigma| + i) = \tau(i)$ for $i < |\tau|$. We write $\sigma \prec \tau$ or $\tau \succ \sigma$ if $\sigma$ is a proper initial segment of $\tau$, and $\sigma \preccurlyeq \tau$ or $\tau \succcurlyeq \sigma$ if either $\sigma \prec \tau$ or $\sigma = \tau$. A subset $D \subseteq 2^{<\omega}$ is \emph{dense} if every $\sigma$ is an initial segment of some $\tau \in D$; $D$ is \emph{dense below $\sigma$} if $D \cup \{\tau: \sigma \not\prec \tau\}$ is dense. A binary \emph{tree} $T$ is a subset of $2^{<\omega}$ such that $\tau \prec \sigma \in T \to \tau \in T$. If $T$ is a tree and $\
sigma \in T$ then let $T(\sigma) = \{\tau: \sigma
\prec \tau \text{ or } \tau \preccurlyeq \sigma\}$.

For a sequence $\vec{R} = (R_n: n \in \omega)$ and each $\nu \in 2^{<\omega}$, let
$$
    R_\nu = \bigcap_{\nu(i) = 1} R_i \cap \bigcap_{\nu(i) = 0} (\omega - R_i).
$$

Readers may refer to \cite{Soare:1987.book}, \cite{Downey.Hirschfeldt:2010.book} and \cite{Simpson:1999.SOSOA} for more computability, algorithmic randomness and reverse mathematics background. For reverse mathematics of Ramsey theory, \cite{Cholak.Jockusch.ea:2001.Ramsey} and \cite{Hirschfeldt.Shore:2007} are good sources.

\section{Cohesive Sets and Cohen Generics}

\begin{theorem}\label{thm:coh-generic}
If $\vec{R} = (R_n: n \in \omega)$ is a computable sequence admitting no computable cohesive set, then \emph{no} $3$-generic computes $\vec{R}$-cohesive sets.
\end{theorem}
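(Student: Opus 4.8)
The plan is to force with finite binary strings (Cohen forcing) and show that for every Turing functional $\Phi$ the set of conditions $\tau$ forcing that $\Phi^G$ is not a total infinite $\vec{R}$-cohesive set is dense. Writing $C=\Phi^G$, the statement ``$C$ is a total infinite $\vec{R}$-cohesive set'' is $\Pi^0_3$: ``$C$ is total'' and ``$C$ is infinite'' are $\Pi^0_2$, and ``$C\cap R_n$ is finite'' is $\Sigma^0_2$, so ``for all $n$ either $C\cap R_n$ or $C\cap(\omega-R_n)$ is finite'' is $\Pi^0_3$. The forcing relation for a $\Sigma^0_3$ sentence is $\Sigma^0_3$, so the displayed set is $\Sigma^0_3$; being also dense, a $3$-generic $G$ meets it, an initial segment of $G$ forces that $\Phi^G$ is not $\vec{R}$-cohesive, and as forced sentences hold along generics, $\Phi^G$ is not $\vec{R}$-cohesive --- proving the theorem. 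Writing $\psi$ for the $\Pi^0_3$ statement above, one has $\sigma\Vdash\psi$ iff no extension of $\sigma$ forces $\neg\psi$, so density of the displayed set is equivalent to the assertion that no condition forces $\psi$. Thus the whole theorem reduces to the key claim: if some $\sigma$ forces ``$\Phi^G$ is a total infinite $\vec{R}$-cohesive set'', then $\vec{R}$ admits a computable cohesive set, contradicting the hypothesis.

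To prove the claim, fix such a $\sigma$. Forcing is preserved under extending conditions, so every $\tau\succcurlyeq\sigma$ forces ``$\Phi^G$ is infinite''; this turns the following into a genuinely computable search: given $\tau\succcurlyeq\sigma$, a node $\nu\in 2^{<\omega}$ and a bound $m$, look for $\rho\succcurlyeq\tau$ and $k>m$ with $k\in R_\nu$ and $\Phi^\rho(k)\downarrow=1$. Call $\nu$ \emph{large below $\tau$} if this search halts for all $\rho\succcurlyeq\tau$ in place of $\tau$ and all $m$; largeness below $\tau$ is inherited by every $\tau'\succcurlyeq\tau$, the empty node $\<\>$ is large below $\sigma$, and if $\nu$ is large below $\tau$ the search always terminates. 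Cohesiveness prevents branching: forcing cohesiveness implies $\sigma$ forces ``$C\cap R_{\nu 0}$ is finite or $C\cap R_{\nu 1}$ is finite'' for every $\nu$, hence densely below $\sigma$ one of the two is forced finite, and forcing ``$C\cap R_{\nu i}$ is finite'' below a condition $\rho^{*}$ destroys the largeness of $\nu i$ below $\rho^{*}$; with largeness of $\nu$ this gives, for every $\nu$ large below $\tau$, some $\tau^{*}\succcurlyeq\tau$ and $i<2$ with $\nu i$ large below $\tau^{*}$. Iterating yields $\sigma=\rho_0\preccurlyeq\rho_1\preccurlyeq\cdots$, nodes $\<\>=\nu_0\prec\nu_1\prec\cdots$ with $|\nu_j|=j$ and $\nu_j$ large below $\rho_j$, and $c_0<c_1<\cdots$ with $\Phi^{\rho_{j+1}}(c_j)\downarrow=1$ and $c_j\in R_{\nu_{j+1}}$; then $C^{*}=\{c_j:j\in\omega\}$ is infinite and, as $c_j\in R_{\nu_{j+1}}\subseteq R_{\nu_m}$ whenever $j\ge m$, it is $\vec{R}$-cohesive.

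The main obstacle --- and the point where the hypothesis that $\vec{R}$ has no computable cohesive set is really needed --- is that this iteration must be carried out \emph{effectively}, so that $C^{*}$ is computable. Taken at face value, ``$\nu i$ is large below $\tau^{*}$'' is $\Pi^0_2$ and choosing the correct child together with a witnessing $\tau^{*}$ appears to require $\emptyset'$, which would only produce a $\Delta^0_2$ cohesive set and no contradiction. The way around this is to avoid certifying largeness: at stage $j$ one dovetails, searching below the current condition simultaneously for an element of $R_{\nu_j 0}$ and for an element of $R_{\nu_j 1}$ above the current maximum, and commits to whichever search halts first; since $\nu_j$ is large (so some element of $R_{\nu_j}$ can always be added), at least one of the two searches must halt, so the construction never stalls, is plainly computable, and still outputs an $\vec{R}$-cohesive set because each $c_j$ falls in the cell named by the path actually followed. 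Setting up the bookkeeping so that the relevant largeness really does persist along the followed branch --- i.e.\ so that ``at least one search halts'' is legitimately available at \emph{every} stage --- is the crux, and I expect it to be the hardest part of making this sketch rigorous; once it is in hand, $C^{*}$ is a computable $\vec{R}$-cohesive set, the desired contradiction.
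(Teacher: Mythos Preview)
Your framework is essentially the paper's: both reduce the theorem to a density statement and prove density by contradiction, extracting a computable $\vec{R}$-cohesive set from a hypothetical condition $\sigma$ below which cohesiveness cannot be spoiled. You have also correctly located the one real difficulty, namely making the branch selection effective.

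The gap you flag is genuine, and the greedy fix you propose (``commit to whichever search halts first'') does not work as stated: the branch that halts first need not be large below the new condition, so the invariant ``$\nu_j$ is large below $\rho_j$'' can break at stage $j+1$, and then nothing guarantees that either search at stage $j+1$ terminates. The paper closes the gap with a \emph{finite injury} argument rather than a greedy one. At level $i$ it guesses a triple $(b,\rho,x)$ with $\rho$ extending the current level-$i$ condition, intended to witness
\[
\forall \zeta \succ \rho\ \forall u > x\ \bigl(\Phi_e(\zeta;u)\!\downarrow \ \to\ R_i(u)=b\bigr),
\]
and proceeds to deeper levels under that guess. If the guess is ever falsified---a $\Sigma^0_1$ event, since one simply observes some $\zeta\succ\rho$ and $u>x$ with $u\in\dom\Phi_e(\zeta)$ and $R_i(u)\neq b$---the construction injures all levels $>i$ and advances to the lexicographically next triple at level $i$. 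Because $\sigma$ (and hence each stabilised $\sigma_i\succcurlyeq\sigma$) lies outside $D_{e,1,i}$, the set of \emph{correct} triples at level $i$ is nonempty and $\Pi^0_1$, so the enumeration eventually reaches one that is never falsified; level $i$ then stabilises. Standard finite-injury bookkeeping gives that every level stabilises, the path $\nu=\lim_s\nu_s$ is infinite, and the resulting $C$ is computable and $\vec{R}$-cohesive. This injury mechanism is exactly the missing ingredient in your sketch.
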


We prove the above theorem by establishing some density lemmata.

Fix a functional $\Phi_e$. Let
$$
    D_{e,0} = \{\tau: \exists x \forall \rho \succ \tau(|\dom \Phi_e(\rho)| < x)\},
$$
and let
$$
    D_{e,1,n} = \{\tau: \forall \rho \succ \tau, x \exists \zeta \succ \rho, u,v > x(u,v \in \dom \Phi_e(\zeta) \wedge R_n(u) \neq R_n(v))\}.
$$

\begin{lemma}\label{lem:coh-generic-density}
For every $e$, $D_{e,0} \cup \bigcup_n D_{e,1,n}$ is dense.
\end{lemma}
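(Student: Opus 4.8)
The plan is to fix the functional $\Phi_e$ and an arbitrary string $\sigma$, and to argue by contradiction: supposing that \emph{no} $\tau \succcurlyeq \sigma$ lies in $D_{e,0} \cup \bigcup_n D_{e,1,n}$, I will build a \emph{computable} $\vec{R}$-cohesive set, contradicting the hypothesis that $\vec{R}$ admits no such set. Two facts drop out of the supposition. Since no $\tau \succcurlyeq \sigma$ is in $D_{e,0}$, for each $\tau \succcurlyeq \sigma$ the sets $\dom \Phi_e(\rho)$ have unbounded size --- hence unbounded suprema --- as $\rho$ ranges over extensions of $\tau$. Since no $\tau \succcurlyeq \sigma$ is in $D_{e,1,n}$, for each $\tau \succcurlyeq \sigma$ and each $n$ there are $\rho \succ \tau$ and $x$ --- call such a $\rho$ \emph{$n$-stable with bound $x$} --- such that $R_n(u) = R_n(v)$ whenever $\zeta \succ \rho$ and $u,v > x$ both belong to $\dom \Phi_e(\zeta)$; with the first fact this singles out a side $b < 2$ on which every sufficiently large number that ever enters $\dom \Phi_e$ below $\rho$ must lie with respect to $R_n$.

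For bookkeeping, call $\nu \in 2^{<\omega}$ \emph{viable at $\rho$} (where $\rho \succcurlyeq \sigma$) if for every $x$ the set of $\zeta \succcurlyeq \rho$ admitting some $a > x$ with $a \in \dom \Phi_e(\zeta) \cap R_\nu$ is dense below $\rho$. Viability at $\rho$ clearly passes to every extension of $\rho$, and by the first fact the empty string is viable at $\sigma$. The engine of the construction is a propagation step: \emph{if $\nu$ has length $n$ and is viable at $\rho$, then for every $\rho' \succcurlyeq \rho$ there are $c < 2$ and $\rho'' \succcurlyeq \rho'$ with $\nu\<c\>$ viable at $\rho''$} (writing $\nu\<c\>$ for $\nu$ with $c$ appended). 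To see this, use the second fact to pick $\tilde\rho \succ \rho'$ that is $n$-stable with some bound $x$; viability of $\nu$ --- inherited by $\tilde\rho$ --- then provides $\rho'' \succcurlyeq \tilde\rho$ with a witness $a > x$ in $\dom \Phi_e(\rho'') \cap R_\nu$, and we put $c = R_n(a)$. Now, given any target $y$ and any $\xi \succcurlyeq \rho''$, viability of $\nu$ at $\rho''$ with bound $\max(x,y)$ yields $\zeta \succcurlyeq \xi$ and $b > \max(x,y)$ with $b \in \dom \Phi_e(\zeta) \cap R_\nu$; since $a$ and $b$ lie in $\dom \Phi_e(\zeta)$, exceed $x$, and $\zeta \succcurlyeq \tilde\rho$, $n$-stability forces $R_n(b) = R_n(a) = c$, so $b \in R_{\nu\<c\>}$ --- whence $\nu\<c\>$ is viable at $\rho''$, as claimed.

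Granting the propagation step, the construction runs in stages, maintaining stems $\sigma = \rho_0 \preccurlyeq \rho_1 \preccurlyeq \cdots$, guesses $\nu_s \in 2^s$ with $\nu_s$ viable at $\rho_s$, and a set $C$ whose members are enumerated in strictly increasing order. At stage $s$ one first finds a bound $x$ and a string $\tilde\rho \succ \rho_s$ witnessing that $\tilde\rho$ is $s$-stable with bound $x$ (possible since $\rho_s \notin D_{e,1,s}$); then, using viability, one picks $\rho_{s+1} \succcurlyeq \tilde\rho$ and $a_s \in \dom \Phi_e(\rho_{s+1}) \cap R_{\nu_s}$ with $a_s$ larger than $x$ and than every current member of $C$; enumerates $a_s$ into $C$; and sets $\nu_{s+1} = \nu_s\<R_s(a_s)\>$, which is viable at $\rho_{s+1}$ by (the argument of) the propagation step. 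Since $\nu_s$ has length $s$, for each $n$ only the finitely many $a_t$ with $t \leq n$ can fail to lie on the $\nu(n)$-side of $R_n$, where $\nu = \bigcup_s \nu_s$; so $C$ is $\vec{R}$-cohesive, it is infinite because a number is added at every stage, and it is computable because its members are enumerated increasingly. This contradicts the choice of $\vec{R}$ and proves the lemma.

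The real obstacle sits in the first step of each stage. ``Find $x$ and $\tilde\rho$ witnessing $\rho_s \notin D_{e,1,s}$'' asks for a witness to a $\Sigma^0_2$ assertion --- the inner clause ``$\forall \zeta \succ \tilde\rho\ \forall u,v > x\ (u,v \in \dom \Phi_e(\zeta) \to R_s(u) = R_s(v))$'' is $\Pi^0_1$ --- and no finite search can confirm it, so the construction as described is not yet computable. I would fix this with a finite-injury overlay: a requirement $\mathcal{C}_n$ keeps a current guessed witness $(\tilde\rho,x)$, provisionally adopts the next untried candidate (extending the stem accordingly), and abandons it --- injuring the requirements $\mathcal{C}_m$ with $m > n$ and truncating $\nu$ to length $n$ --- as soon as a refuting $\zeta$ is discovered. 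Since a genuine $n$-stable extension exists above any stem by the second fact, each $\mathcal{C}_n$ revises only finitely often, so the stems and each coordinate of $\nu$ converge; and --- with the several searches dovetailed so that none stalls a stage --- each $R_n$ accrues only finitely many ``wrong-side'' or discarded members of $C$, a defect that cohesiveness absorbs. Carrying out this priority management correctly, and in particular reconciling stem revisions with the already-committed, irremovable elements of $C$, is the technical heart of the argument.
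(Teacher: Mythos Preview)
Your approach coincides with the paper's: both argue by contradiction and build a computable $\vec{R}$-cohesive set via a finite-injury construction that guesses the $\Pi^0_1$ witnesses to $\tau \notin D_{e,1,n}$ and revises upon refutation, dovetailing this with the search for new elements of $C$. The paper carries out in full the priority argument you only sketch in your final paragraph --- tracking triples $(\nu_s(i),\sigma_{i+1,s},x_{i,s})$ as the guessed witnesses and verifying non-blocking and convergence directly --- while your ``viability'' invariant is a clean repackaging of why the limiting $C$ is cohesive that the paper leaves implicit.
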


\begin{proof}
For a contradiction, fix $\sigma$ such that $\forall \tau \succ \sigma(\tau \not\in D_{e,0} \cup \bigcup_n D_{e,1,n})$. We build a computable $\vec{R}$-cohesive $C$ by a finite injury argument.

At stage $0$, let $l_0 = 0$, $\sigma_{0,0} = \sigma$ and $\nu_0 = C_0 = \emptyset$.

At stage $s+1$, suppose that we have defined $l_s$, $\nu_s \in 2^{l_s}$, $(\sigma_{i,s}: i \leq l_s)$, $(x_{i,s}: i < l_s)$ and $C_s$ such that $\sigma_{i,s} \preccurlyeq \sigma_{i+1,s}$ for $i < l_s$ and $C_s$ is finite. Wait until one of the following statements holds:
\begin{enumerate}
    \item for some $\zeta \succ \sigma_{l_s,s}$ and $u > \max C_s$, $u \in \dom \Phi_e(\zeta) \cap R_{\nu_s}$;
    \item for some $i < l_s$, $\zeta \succ \sigma_{i+1,s}$ and $u > x_{i,s}$, $u \in \dom \Phi_e(\zeta) - R_{\nu_s \uh (i+1)}$.
\end{enumerate}

If (1) holds, then let $C_{s+1} = C_s \cup \{u\}$. Let $l_{s+1} = l_s + 1$, $\nu_{s+1} = \nu_s \<0\>$, $(\sigma_{i,s+1}: i \leq l_s) = (\sigma_{i,s}: i \leq l_s)$ and $\sigma_{l_s+1,s+1} = \sigma_{l_s,s}$, and $(x_{i,s+1}: i < l_s) = (x_{i,s}: i < l_s)$ and $x_{l_s,s+1} = x_{l_s-1,s}$. Goto stage $s+2$.

Suppose that (2) holds, fix the least $i$ as in (2) that has been observed. Let $(b, \sigma_{i+1,s+1}, x_{i,s+1})$ be the least triple greater than $(\nu_s(i), \sigma_{i+1,s}, x_{i,s})$ such that $b < 2$, $\sigma_{i+1,s+1} \succcurlyeq \sigma_{i,s}$ and $x_{i,s+1} \geq x_{i,s}$. Let $l_{s+1} = i+1$, $\nu_{s+1} = (\nu_s \uh i) \<b\>$, $(\sigma_{j,s+1}: j \leq i) = (\sigma_{j,s}: j \leq i)$ and $(x_{j,s+1}: j < i) = (x_{j,s}: j < i)$. Let $C_{s+1} = C_s$ and goto stage $s+2$.

Finally, let $C = \bigcup_s C_s$.

Intuitively, we guess that $(\nu_s(i), \sigma_{i+1,s}, x_{i,s})$ is a witness for $\sigma_{i,s}$ being not in $D_{e,1,i}$. In other words, we guess that
$$
    \forall \zeta \succ \sigma_{i+1,s}, u > x_{i,s} (\Phi_e(\zeta; u) \downarrow \to R_i(u) = \nu_s(i)).
$$
If our speculation turns out to be wrong, then we make a new guess. As $\sigma_{i,s} \not\in D_{e,1,i}$ and the set of witnesses is $\Pi^0_1$, eventually a correct one will fall into our hands, unless $\sigma_{i,s}$ is changed.

\begin{claim}
The construction is not blocked at any stage.
\end{claim}

\begin{proof}
If the following statement holds
$$
    \forall \zeta \succ \sigma_{l_s,s}, u > x_{l_s - 1, s} (\Phi_e(\zeta; u) \downarrow \to u \in R_{\nu_s}),
$$
then (1) holds for some $\zeta$ and $u$, as $\sigma_{l_s,s} \not\in D_{e,0}$. So the construction proceeds to stage $s+2$.

Otherwise, (2) holds for some $i, \zeta$ and $u$. So, either we find such a triple in stage $s+1$, or (1) holds before such a triple is discovered. In either case, the construction proceeds to stage $s+2$.
\end{proof}

\begin{claim}
For every $i$, all following limits exist
$$
    \nu(i) = \lim_s \nu_s (i), \sigma_{i} = \lim_s \sigma_{i,s}, x_i = \lim_s x_{i,s}.
$$
\end{claim}

\begin{proof}
Clearly, $\sigma_0 = \lim_s \sigma_{0,s} = \sigma$.

Suppose that all following limits exist
$$
    \sigma_j = \lim_s \sigma_{j,s} \text{ for } j \leq i, \nu(j) = \lim_s \nu_s(j) \text{ and } x_j = \lim_s x_{j,s} \text{ for } j < i.
$$
As $\sigma_i \not\in D_{e,1,n}$, there exist $\rho \succ \sigma_i$, $x > \max \{x_j: j < i\}$ and $b < 2$ such that
$$
    \forall \zeta \succ \rho, u > x(u \in \dom \Phi_e(\zeta) \to R_i(u) = b).
$$
So, the construction eventually will encounter such triple and the limits $\lim_s \nu_s(i)$, $\lim_s \sigma_{i+1,s}$ and  $\lim_s x_{i,s}$ exist.
\end{proof}

By the above claims, $l_s \to \infty$ when $s \to \infty$. As $C_s$ increases whenever $l_s$ increases, $C$ is infinite. It follows from the construction that $C$ is computable. As $\nu(i)$ exists for all $i$, $C$ is $\vec{R}$-cohesive.
\end{proof}

\begin{lemma}
If $\tau \in D_{e,1,n} - D_{e,0}$ then for every $x$ the following set is dense below $\tau$:
$$
    \{\zeta: \exists u,v > x(u,v \in \dom \Phi_e(\zeta) \wedge R_n(u) \neq R_n(v))\}.
$$
\end{lemma}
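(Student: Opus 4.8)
The plan is to unwind the definitions; no construction or forcing argument is needed here, since the statement is just the defining clause of $D_{e,1,n}$ localized to one string $\tau$ and one threshold $x$.

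First I would pin down what ``dense below $\tau$'' requires. By definition $D$ is dense below $\tau$ iff $D\cup\{\mu:\tau\not\prec\mu\}$ is dense, and for a string $\mu$ this is automatic unless $\mu\succ\tau$: if $\mu$ is incomparable with $\tau$ then $\mu$ itself lies in $\{\mu:\tau\not\prec\mu\}$, and if $\mu\preccurlyeq\tau$ then $\tau$ does. So it suffices to fix an arbitrary $\eta\succ\tau$ together with the given $x$, and produce some $\zeta\succcurlyeq\eta$ with $\exists u,v>x\,(u,v\in\dom\Phi_e(\zeta)\wedge R_n(u)\neq R_n(v))$.

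Then I would simply invoke the hypothesis $\tau\in D_{e,1,n}$ with $\rho:=\eta$ (a legitimate instance, as $\eta\succ\tau$) and with the given threshold $x$. Unpacking the definition of $D_{e,1,n}$ yields exactly a $\zeta\succ\eta$ and $u,v>x$ with $u,v\in\dom\Phi_e(\zeta)$ and $R_n(u)\neq R_n(v)$, i.e.\ a member of the displayed set extending $\eta$. This establishes density below $\tau$.

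I do not expect any genuine obstacle; the only place calling for a little care is the reformulation of ``dense below $\tau$'' in the first step. I would also remark that the hypothesis $\tau\notin D_{e,0}$ is not actually used in this lemma: it is carried along because the lemma will be applied, in the proof of Theorem~\ref{thm:coh-generic}, to initial segments $\tau$ of a $3$-generic $G$ lying in $D_{e,1,n}\setminus D_{e,0}$, where the clause $\tau\notin D_{e,0}$ is precisely what forces $\dom\Phi_e(G)$ to be unbounded and hence makes the resulting failure of cohesiveness of $\Phi_e(G)$ non-vacuous.
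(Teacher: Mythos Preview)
Your proposal is correct and matches the paper's approach: the paper's proof consists of the single word ``Immediately'', and your argument is precisely the unwinding of definitions that justifies that word. Your side remark that the hypothesis $\tau\notin D_{e,0}$ is not actually used in the lemma is accurate and worth noting.
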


\begin{proof}
Immediately.
\end{proof}

Theorem \ref{thm:coh-generic} follows from the above lemmata.

\begin{remark}
Note that there are $1$-generics with jumps $\gg \emptyset'$, by Friedberg's Jump Inversion Theorem \cite[Theorem VI.3.1]{Soare:1987.book}. Hence a $1$-generic may compute a cohesive set for every computable sequence, by Jockusch and Stephan \cite{Jockusch.Stephan:1993.cohesive}. However, the situation for $2$-generics is unkown.
\end{remark}

\section{Cohesive Sets and Martin-L\"{o}f Randoms}

In this section, we prove the following theorem.

\begin{theorem}\label{thm:coh-random}
If $\vec{R} = (R_n: n \in \omega)$ is a computable sequence which admits no computable cohesive set, then there exists a Martin-L\"{o}f random $X$ which computes no $\vec{R}$-cohesive set.
\end{theorem}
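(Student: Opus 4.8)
The plan is to adapt the density/forcing argument from Section 2 to the setting of algorithmic randomness, replacing "dense sets of conditions avoided by a generic" with "null classes avoided by a Martin-Löf random". The key observation is that the proof of Lemma 2.4 (\texttt{lem:coh-generic-density}) is really a \emph{uniform computable construction}: if $\Phi_e$ applied to a real $X$ produces an infinite $\vec{R}$-cohesive set, and $X$ fails to meet any of the requirements encoded by $D_{e,0}$ and the $D_{e,1,n}$, then the finite-injury construction in that lemma manufactures a \emph{computable} $\vec{R}$-cohesive set — contradiction. So for each $e$ I want to show that the set of oracles $X$ for which $\Phi_e(X)$ is infinite and $\vec{R}$-cohesive is contained in a $\Sigma^0_1$-definable null class (uniformly in $e$), hence is null, and then invoke the fact that a single Martin-Löf random real avoids every member of the universal Martin-Löf test, equivalently computes no real lying in an effectively presented null $G_\delta$ built from these pieces.

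First I would fix $e$ and analyze the "bad event" $B_e = \{X : \Phi_e(X) \text{ is infinite and } \vec{R}\text{-cohesive}\}$. On $B_e$, $\Phi_e(X)$ has infinite domain, so the analogue of $D_{e,0}$ is vacuous along $X$; thus for the relevant branch the construction of Lemma 2.4 forces $X$ to pass through infinitely many "splitting" stages of the form: there exist $\zeta \prec X$ (or rather, $\zeta$ an initial segment of $X$ long enough) and $u, v$ large with $u,v \in \dom \Phi_e(\zeta)$ and $R_n(u) \neq R_n(v)$, for the relevant $n$. The crucial quantitative step is a \emph{measure estimate}: I want to show that, conditioned on having reached a given finite string $\sigma$, the measure of $X \succ \sigma$ that \emph{ever} produce such a split beyond a prescribed threshold is bounded away from $1$ — more precisely, that the set of $X$ which produce splits at every level has measure zero. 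This is where I expect the real work to be: one must exploit that $\vec{R}$ admits no computable cohesive set to show that "most" oracles, in the sense of Lebesgue measure on $2^\omega$, cannot be steering $\Phi_e$ through an infinite cohesive set. Concretely I would iterate: from $\sigma$, let $G_\sigma$ be the (open, uniformly $\Sigma^0_1$) set of extensions that eventually exhibit a fresh $R_n$-split for some $n$ with the two witnesses exceeding the current max; either $\mu(G_\sigma \mid [\sigma])$ can be made $< 1$ by choosing the threshold appropriately — in which case a Borel–Cantelli / measure-theoretic König argument gives a measure-positive set of $X$ that eventually stop splitting, and for those $X$ one runs the Lemma 2.4 construction to get a computable cohesive set, contradiction — or $\mu(G_\sigma \mid [\sigma]) = 1$ for all $\sigma$, in which case $B_e$ is contained in the intersection of the corresponding open sets and I need a separate argument that this intersection is null.

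Rather than fight that dichotomy directly, the cleaner route — and the one I would actually write up — is to build a single Martin-Löf test $(U_k)_{k \in \omega}$ capturing all reals that compute an $\vec{R}$-cohesive set via \emph{any} $\Phi_e$, using a cost/weight bookkeeping so that $\mu(U_k) \leq 2^{-k}$. For each $e$ and each "guess" at the finite-injury data $(\nu, (\sigma_i), (x_i))$ of Lemma 2.4, one enumerates into the test the measure of oracles that would refute that guess while still keeping $\dom \Phi_e$ infinite; the point is that, because no computable cohesive set exists, \emph{every} complete consistent guess is eventually refuted on a measure-$1$ set, so summing the refutation-measures over a carefully chosen sparse sequence of guesses stays finite, and one normalizes. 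Then any Martin-Löf random $X$ lies outside $\bigcap_k U_k$, which forces $X$ to either make $\dom \Phi_e(X)$ finite (so $\Phi_e(X)$ is not an infinite set at all) or to stabilize the guess — and a stabilized guess, fed back into Lemma 2.4's machinery, produces a computable $\vec{R}$-cohesive set, contradiction. The main obstacle, as flagged, is making the measure bookkeeping converge: one must show the total weight of "refutation events" is finite, which rests on a combinatorial lemma saying that along any single oracle the number of distinct guesses that get refuted is controlled, and that distinct oracles' refutations overlap enough — this is exactly the place where the hypothesis that $\vec{R}$ has no computable cohesive set does its work, and I would isolate it as a standalone sublemma before assembling the test.
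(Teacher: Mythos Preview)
Your plan has a fatal measure-theoretic error. You propose to show that $\bigcup_e B_e$, the class of reals computing some $\vec{R}$-cohesive set, is null and can be covered by a Martin-L\"{o}f test. But this class has \emph{full} measure, not measure zero. Indeed, every $2$-random $X$ is $\GL_1$ (so $X' \equiv_T X \oplus \emptyset'$), and by the relativized Ku\v{c}era theorem $X \oplus \emptyset'$ computes a $\{0,1\}$-valued function that is DNC relative to $\emptyset'$; hence $X' \gg \emptyset'$, and by Jockusch--Stephan $X$ computes an $\vec{R}$-cohesive set for every computable $\vec{R}$. Since the $2$-randoms form a conull class, your ``bad event'' has measure $1$, and no Martin-L\"{o}f test can cover it. The sentence ``most oracles, in the sense of Lebesgue measure on $2^\omega$, cannot be steering $\Phi_e$ through an infinite cohesive set'' is therefore exactly backwards, and the Borel--Cantelli/weight-bookkeeping program built on it cannot succeed.

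The paper does not try to avoid a null class at all. Instead it forces with the partial order $\mathbb{P}$ of infinite computable subtrees of a fixed Ku\v{c}era tree $U$ (a computable tree with $m[U] > 0$ whose paths are all Martin-L\"{o}f random, together with a computable function $g$ such that $[T_e] \cap [U] \neq \emptyset \leftrightarrow m([T_e] \cap [U]) > g(e)$). For this notion of forcing the paper proves a density lemma parallel to Lemma~\ref{lem:coh-generic-density}: below any $S \in \mathbb{P}$ there is $T \in \mathbb{P}$ either forcing $|\dom \Phi_e| < x$ for some $x$, or forcing that the set of paths along which $\dom \Phi_e$ is almost contained in $R_n$ or in $\omega - R_n$ has measure zero. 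The finite-injury construction you cite from Lemma~\ref{lem:coh-generic-density} reappears here, but the role of ``extend the string'' is played by ``shrink to a positive-measure computable subtree'', and Ku\v{c}era's measure threshold $g$ is what makes the waiting steps computable. One then builds a descending sequence $(S_k)$ in $\mathbb{P}$ meeting these dense sets (and additionally, at stage $k$, extending to a node $\sigma$ witnessing a fresh $R_{n_i}$-split of $\Phi_i$ for each earlier $i$ in the second case), and takes any $X \in \bigcap_k [S_k]$. Such an $X$ is Martin-L\"{o}f random because $X \in [U]$, and by construction $\Phi_e(X)$ is either finite or fails cohesiveness at $R_{n_e}$. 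The point is that the desired random is produced \emph{explicitly} as a path through a shrinking family of positive-measure $\Pi^0_1$ classes, not by excluding a null class of bad oracles.
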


Let $m$ denote the canonical Lebesgue measure on Cantor space. Fix a computable enumeration $(T_e: e \in \omega)$ of all computable binary trees. We need a coding technique which was first introduced by Ku\v{c}era \cite{Kucera:85}. The version that we need is due to Reimann (see \cite[\S8.5]{Downey.Hirschfeldt:2010.book}). \footnote{The author thanks Frank Stephan and Jing Zhang for pointing out a mistake in the statement of the following theorem.}

\begin{theorem}[Ku\v{c}era coding] \label{thm:Kucera-coding}
There exist a computable binary tree $U$ and a computable function $g: \omega \to \mathbf{Q} \cap (0,1)$ such that $m [U] > 0$ and for all $e$
$$
    [T_e] \cap [U] \neq \emptyset \leftrightarrow m ([T_e] \cap [U]) > g(e).
$$
\end{theorem}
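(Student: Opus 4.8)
The plan is to build $U$ directly as a decidable subtree of $2^{<\omega}$, obtained from $2^{<\omega}$ by a sequence of small ``excisions'', at most one attempt being made on behalf of each tree $T_e$. I would fix a budget $\beta_e = 2^{-e-2}$, so that $\sum_e \beta_e = \tfrac12$, and set $g(e) = 2^{-e-3} = \tfrac12\beta_e$, which is a computable function into $\mathbf{Q}\cap(0,1)$. The target is that for every $e$ either $[T_e]\cap[U] = \emptyset$ or $m([T_e]\cap[U]) \ge \beta_e$; since $m([T_e]\cap[U]) > g(e) > 0$ trivially forces $[T_e]\cap[U]\ne\emptyset$, this gives the biconditional.

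For the construction I would proceed in stages $N = 0,1,2,\dots$, fixing $U\cap 2^{\le N}$ at the end of stage $N$. At stage $N$ the requirements $R_0,\dots,R_N$ act in turn, and one that has already ``fired'' does nothing. When $R_e$ acts, let $V$ be the tree built so far this run (all of levels below $N$, and level $N$ as already trimmed by $R_0,\dots,R_{e-1}$ at the present stage), and let $\mu$ be the measure of the clopen set determined by those strings $\tau\in T_e\cap 2^N$ that still lie in $V$. If $\mu\le\beta_e$, then $R_e$ \emph{fires}: delete every string whose length-$N$ initial segment lies in $T_e$ (strings of length below $N$ are never deleted, so the result is still a tree); otherwise $R_e$ does nothing at this stage. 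Then $U = \bigcup_N (U\cap 2^{\le N})$.

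To finish I would check three things. First, $U$ is computable: only $R_0,\dots,R_N$ are ever consulted at stage $N$, and whether $R_e$ has fired by stage $N$ is decided by $T_0,\dots,T_N$ restricted below level $N$, so $U\cap 2^{\le N}$ is computable uniformly in $N$, hence $U$ is a computable binary tree. Second, $m[U] > 0$: $R_e$ fires at most once, and each firing deletes a clopen set of measure at most $\beta_e$, so at most $\sum_e\beta_e = \tfrac12$ is ever removed, leaving $m[U]\ge\tfrac12$. Third, the nontrivial direction of the biconditional: if some $X\in[T_e]\cap[U]$, then $R_e$ cannot have fired — a firing at stage $N$ would delete $X\uh N\in T_e\cap 2^N$, contradicting $X\in[U]$ — so the quantities $\mu$ computed for $R_e$ at the successive stages $N\ge e$ all exceed $\beta_e$; these $\mu$ are the measures of a decreasing sequence of clopen sets whose intersection is exactly $[T_e]\cap[U]$, so by continuity of measure $m([T_e]\cap[U])\ge\beta_e > g(e)$.

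I expect the main obstacle to be keeping $U$ genuinely \emph{computable} rather than merely $\Pi^0_1$: the firing test has the flavour of an unbounded search, and if every $R_e$ were allowed to act at every level then computing $U\cap 2^{\le N}$ would involve infinitely many requirements. The remedy is exactly the stratification above — $R_e$ first acts at stage $e$ and fires at most once — which simultaneously bounds the number of requirements relevant to any given level and caps the total damage done by $R_e$ at $\beta_e$. The residual bookkeeping (that the excision operation respects the tree ordering, and that the decreasing clopen sets really do intersect down to $[T_e]\cap[U]$ rather than to something larger) is routine verification about finite trees and monotone limits of measures.
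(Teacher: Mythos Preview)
The paper does not give its own proof of this theorem: it is quoted as a known result, attributed to Ku\v{c}era with the present formulation credited to Reimann, and the reader is referred to \cite[\S8.5]{Downey.Hirschfeldt:2010.book}. So there is no in-paper argument to compare against.

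That said, your proposal is correct and is essentially the standard construction. The stratification you describe---letting $R_e$ first act at level $e$, fire at most once, and excise a clopen set of measure at most $\beta_e=2^{-e-2}$---is exactly what makes $U$ a \emph{decidable} tree with $m[U]\ge 1/2$, and your verification of the nontrivial direction is right: if $R_e$ never fires, the clopen sets $C_N$ determined by $T_e\cap V_N\cap 2^N$ are decreasing with $m(C_N)>\beta_e$, and $\bigcap_N C_N=[T_e]\cap[U]$ (the inclusion $\subseteq$ uses that $V_{N+1}\cap 2^{\le N}=U\cap 2^{\le N}$, so membership of $X\uh(N+1)$ in $V_{N+1}$ forces $X\uh N\in U$), whence $m([T_e]\cap[U])\ge\beta_e>g(e)$. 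Conversely, if $R_e$ fires at some level $N$ then every length-$N$ node of $T_e$ is removed from $U$, and since both $T_e$ and $U$ are downward closed this kills $[T_e]\cap[U]$ entirely. The only point worth tightening in a write-up is to make explicit that ``has $R_e$ fired yet'' is decidable by re-running the first $N$ stages, and that the uniform computability of $(T_e)$ is being used when computing $\mu$; both are routine.
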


Fix $U$ and $g$ as in Theorem \ref{thm:Kucera-coding}. The plan is to build a descending sequence $(S_n: n \in \omega)$ of computable subtrees of $U$ such that every $X \in \bigcap_n [S_n]$ is a desired random for Theorem \ref{thm:coh-random}.

Let $\mathbb{P}$ be the set of infinite computable subtrees of $U$. By Theorem \ref{thm:Kucera-coding}, if $T \in \mathbb{P}$ then $m[T] > 0$. We order trees in $\mathbb{P}$ by inclusion and define density accordingly. For each $e$ and $x$, let
$$
    E_{e,0,x} = \{T \in \mathbb{P}: \forall Y \in [T] (|\dom \Phi_e(Y)| < x)\},
$$
and for each $e$ and $n$, let
$$
    E_{e,1,n} = \{T \in \mathbb{P}: m\{Y \in [T]: \dom \Phi_e(Y) \subseteq^* R_n \text{ or } \dom \Phi_e(Y) \subseteq^* \omega - R_n\} = 0\}.
$$
As an immediate observation, if $T \in E_{e,0,x}$ ($T \in E_{e,1,n}$) and $T' \subseteq T$ in $\mathbb{P}$ then $T' \in E_{e,0,x}$ ($T' \in E_{e,1,n}$).

We establish a parallel of Lemma \ref{lem:coh-generic-density}.

\begin{lemma}\label{lem:coh-rand-density}
For each $e$, $\bigcup_x E_{e,0,x} \cup \bigcup_n E_{e,1,n}$ is dense.
\end{lemma}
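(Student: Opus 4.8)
The plan is to imitate the proof of Lemma~\ref{lem:coh-generic-density}, with the poset of finite binary strings replaced by $\mathbb{P}$. Assume for a contradiction that some $T \in \mathbb{P}$ has no extension in $\bigcup_x E_{e,0,x} \cup \bigcup_n E_{e,1,n}$. Then every $S \subseteq T$ with $S \in \mathbb{P}$ has the property that for each $x$ some $Y \in [S]$ satisfies $|\dom \Phi_e(Y)| \geq x$ (because $S \notin E_{e,0,x}$), and also
$$
    m\{Y \in [S] : \dom \Phi_e(Y) \subseteq^* R_n \text{ or } \dom \Phi_e(Y) \subseteq^* \omega - R_n\} > 0 \quad\text{for every } n
$$
(because $S \notin E_{e,1,n}$). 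From these two facts I would build a computable $\vec{R}$-cohesive set $C$, contradicting the hypothesis on $\vec{R}$.

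The construction is the finite-injury construction in the proof of Lemma~\ref{lem:coh-generic-density}, read through the following dictionary. A ``condition at level $i$'' consists of a guessed side $\nu(i) < 2$ together with a threshold $x_i$; from these and the tree $S_i$ produced at level $i-1$ (with $S_0 = T$) I form the \emph{witness tree}
$$
    S_{i+1} = S_i \cap V_{i,\nu(i),x_i},
$$
where $V_{i,b,x}$ is a computable presentation of the $\Pi^0_1$ class $\{Y : \forall u > x\,(\Phi_e(Y;u)\downarrow \to R_i(u) = b)\}$, chosen so that $\zeta \in V_{i,b,x}$ together with $\Phi_e(\zeta;u)\downarrow$ and $u > x$ already forces $R_i(u) = b$. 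At stage $s+1$ I search in parallel for (1) a node $\zeta \in S_{l_s,s}$ and a number $u$ exceeding $\max C_s$ and all current thresholds with $\Phi_e(\zeta;u)\downarrow$, and (2) the least $i < l_s$ whose witness tree $S_{i+1,s}$ is found to be finite. In case (1) I add $u$ to $C$ --- note that $u \in R_{\nu_s}$ automatically, by the defining property of the $V$'s --- open a new level with guessed side $0$ and threshold the current top threshold (or $0$ if there is none), and move to the next stage; in case (2) I replace $(\nu_s(i),x_{i,s})$ by the next eligible pair (exactly as Lemma~\ref{lem:coh-generic-density} replaces its triple), discard all levels above $i$, and move to the next stage. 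Finally $C = \bigcup_s C_s$.

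The verification runs parallel to the two claims of Lemma~\ref{lem:coh-generic-density}. The construction never stalls at a stage: if every witness tree $S_{i+1,s}$ with $i < l_s$ is infinite, then $S_{l_s,s}$ is an infinite computable subtree of $U$, hence $S_{l_s,s} \in \mathbb{P}$ by Theorem~\ref{thm:Kucera-coding}, and then the unboundedness of domains over $[S_{l_s,s}]$ supplies a fresh convergence, so (1) applies; otherwise some witness tree is finite, which is eventually discovered, so (2) applies. Every condition stabilizes: once $S_i$ has reached its final value it lies in $\mathbb{P}$, so $S_i \notin E_{e,1,i}$, and hence by the displayed inequality there is a \emph{correct} pair $(b,x)$, namely one with $m[S_i \cap V_{i,b,x}] > 0$; since an incorrect pair yields a measure-zero, hence (again by Theorem~\ref{thm:Kucera-coding}) finite, witness tree, failure of a pair is $\Sigma^0_1$, and I can step past incorrect pairs one at a time until the correct one appears. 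Consequently $l_s \to \infty$, so $C$ is infinite; $C$ is computable by construction (its elements enter in increasing order and there are infinitely many); and $C$ is $\vec{R}$-cohesive because $\nu(i) = \lim_s \nu_s(i)$ and $x_i = \lim_s x_{i,s}$ exist for every $i$ and, by the property of the $V$'s, cofinitely many elements of $C$ lie in $R_i$ when $\nu(i)=1$ and in $\omega-R_i$ when $\nu(i)=0$. This contradicts the assumption that $\vec{R}$ has no computable cohesive set.

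The single point with no counterpart in Lemma~\ref{lem:coh-generic-density} --- and the one I expect to be delicate --- is the effective detection of a failed guess. For Cohen conditions a wrong guess is refuted by producing one convergence on the wrong side; here a wrong guess is caught only because Ku\v{c}era coding (Theorem~\ref{thm:Kucera-coding}) forces the measure of every infinite computable subtree of $U$ away from $0$, so that for the witness trees ``measure zero'' coincides with ``finite'' and the failure of a guess becomes $\Sigma^0_1$. The remaining matters are routine: fixing the presentations $V_{i,b,x}$ so that a convergence seen along one of their nodes is genuinely on the chosen side, and the finite-injury bookkeeping, which follows the pattern of Lemma~\ref{lem:coh-generic-density}.
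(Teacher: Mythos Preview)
Your proposal is correct and follows essentially the same route as the paper: assume some $T\in\mathbb{P}$ avoids the dense set, then run a finite-injury construction of a computable $\vec{R}$-cohesive set using a nested sequence of computable subtrees of $U$ as conditions, with Ku\v{c}era coding providing the $\Sigma^0_1$ detection of a failed guess. The only cosmetic difference is that the paper tests $m[T_{e_{j,s}}]\le g(e_{j,s})$ while you test ``$S_{i+1,s}$ is finite''; by Theorem~\ref{thm:Kucera-coding} these conditions coincide for computable subtrees of $U$.
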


\begin{proof}
For a contradiction, let $e$ and $S \in \mathbb{P}$ be such that there is no $T \subseteq S$ in $\bigcup_x E_{e,0,x} \cup \bigcup_n E_{e,1,n}$. It follows that
$$
    m \{Y \in [S]: |\dom \Phi_e(Y)| < \omega\} = 0.
$$
Moreover, for all $n$ and $T \subseteq S$ with $T \in \mathbb{P}$, there exists $i < 2$ such that
$$
    m \{Y \in [T]: \forall y > x (\Phi_e(Y; y) \downarrow \to R_n(y) = i)\} > 0
$$
for all sufficiently large $x$. We build a computable $\vec{R}$-cohesive set $C$ by a finite injury argument.

At stage $0$, let $i < 2$ and $x$ be such that $m[T] > 0$, where
$$
    T = \{\sigma \in S: \forall y > x (\Phi_e(\sigma; y) \downarrow \to R_0(y) = i)\}.
$$
Let $e_{0,0}$ and $e_{0,1}$ be such that $T_{e_{0,0}} = S$ and $T_{e_{0,1}} = T$, and let $k_0 = 1$, $\nu_0 = \<i\>$, $x_{0,0} = x$ and $C_0 = \emptyset$.

At stage $s+1$, suppose that we have defined $k_s > 0$, $\nu_s \in 2^{k_s}$, $(e_{i,s}: i \leq k_s)$, $(x_{i,s}: i < k_s)$ and $C_s$ such that $C_s$ is finite, $e_{0,s} = e_{0,0}$ and for all $i < k_s$
$$
    T_{e_{i+1,s}} = \{\sigma \in T_{e_{i,s}}: \forall y > x_{i,s}(\Phi_e(\sigma; y) \downarrow \to y \in  R_{\nu_s \uh (i+1)})\}.
$$
Wait until one of the following statements holds:
\begin{enumerate}
    \item $m [T_{e_{j,s}}] \leq g(e_{j,s})$ for some $j \leq k_s$;
    \item for some $y > \max C_s$ and $\sigma \in T_{e_{k_s,s}}$, $\Phi_e(\sigma; y) \downarrow$ and $y \in R_{\nu_s}$.
\end{enumerate}

If (1) holds, then fix the least $j$ that has been observed. As $m[S] > 0$, $j > 0$ by Theorem \ref{thm:Kucera-coding}. Let $(x_{j-1,s+1},b) \in \omega \times \{0,1\}$ be the least pair greater than $(x_{j-1,s},\nu_s(j-1))$ with $x_{j-1,s+1} \geq x_{j-1,s}$. Let $k_{s+1} = j$, $\nu_{s+1} = (\nu_s \uh (j-1)) \<b\>$, $e_{i,s+1} = e_{i,s}$ for $i < j$, $e_{j,s+1}$ be such that
$$
    T_{e_{j,s+1}} = \{\sigma \in T_{e_{j-1,s+1}}: \forall y > x_{j-1,s+1}(\Phi_e(\sigma; y) \downarrow \to y \in R_{\nu_{s+1}})\}.
$$
For $i < j-1$, let $x_{i,s+1} = x_{i,s}$. Let $C_{s+1} = C_s$ and goto stage $s+2$.

If (2) holds, then let $C_{s+1} = C_s \cup \{y\}$. Let $k_{s+1} = k_s + 1$, $\nu_{s+1} = \nu_s \<0\>$, $x_{i,s+1} = x_{i,s}$ for $i < k_s$ and $x_{k_s,s+1} = x_{k_s - 1,  s}$, $e_{i,s+1} = e_{i,s}$ for $i \leq k_s$ and $e_{k_s + 1, s+1}$ be such that
$$
    T_{e_{k_s + 1,s+1}} = \{\sigma \in T_{e_{k_s,s+1}}: \forall y > x_{k_s,s+1}(\Phi_e(\sigma; y) \downarrow \to y \in R_{\nu_{s+1}})\}.
$$
Goto stage $s+2$.

Finally, let $C = \bigcup_s C_s$.

We prove that the construction does produce a desired $C$.

\begin{claim}
The construction is \emph{not} blocked at any stage.
\end{claim}

\begin{proof}
At stage $s+1$, suppose that $m [T_{e_{k_s,s}}] = 0$. Then, either (1) or (2) holds at last and the construction proceeds to stage $s+2$.

Assume that $m [T_{e_{k_s,s}}] > 0$. Then $m [T_{e_{k_s,s}}] > g(e_{k_s,s})$ and
$$
    m \{Y \in [T_{e_{k_s,s}}]: |\dom \Phi_e(Y)| = \omega\} = m [T_{e_{k_s,s}}] > g(e_{k_s,s}).
$$
So, there exist $y$ sufficiently large and $\sigma \in T_{e_{k_s,s}}$ such that $\Phi_e(\sigma; y) \downarrow$. By the inductive hypothesis of the construction, $y \in R_{\nu_s}$. Hence, the construction proceeds to stage $s+2$.
\end{proof}

\begin{claim}
When $s \to \infty$, $k_s \to \infty$ and the limits $\lim_s e_{i,s}$ exist for all $i$.
\end{claim}

\begin{proof}
We prove the claim by induction.

Clearly, $k_s \geq 0$ and $e_{0,s} = e_{0,0}$ for all $s$. Below, fix $i$ and $s_0$ such that $k_s > i$ and $e_{j,s} = e_{j,s_0}$ for all $j \leq i$ and $s > s_0$. Let $e_i = \lim_s e_{i,s}$. It follows that
$$
    m \{Y \in [T_{e_i}]: |\dom \Phi_e(Y)| = \omega\} = m [T_{e_i}] > g(e_i) > 0.
$$
Clearly, $x_j = \lim_s x_{j,s}$ exists for each $j < i$. Let $\bar{x} = \max \{x_j: j < i\}$. If $k_s = i + 1$ for sufficiently many $s > s_0$, then there exist $s_1 > s_0$, $x \geq \bar{x}$ and $b < 2$ such that
$$
    [T_{e_{i+1,s_1}}] = \{Y \in [T_{e_i}]: \forall y > x(\Phi_e(Y;y) \downarrow \to R_i(y) = b)\}
$$
and $m [T_{e_{i+1,s_1}}] > g(e_{i+1,s_1})$. Hence, $e_{i+1,s} = e_{i+1,s_1}$ and $k_s > i + 1$ for all $s > s_1$.
\end{proof}

Obviously, $C$ is computable. We show that $C$ is $\vec{R}$-cohesive. By the above claim and the construction, $C$ is infinite. The above claim implies that $\lim_s \nu_s(i)$ exists for all $i$. Hence, $C \subseteq^* R_i$ or $C \subseteq^* \omega - R_i$ for all $i$.
\end{proof}

\begin{proof}[Proof of Theorem \ref{thm:coh-random}]
Let $S_0 = U$.

Suppose that we have $S_k \in \mathbb{P}$ and two partial sequences $(x_i: i < k)$ and $(n_i: i < k)$ such that
\begin{enumerate}
    \item for each $i < k$ exactly one of $x_i$ and $n_i$ is defined,
    \item if $x_i$ is defined then $S_k \in E_{i,0,x_i}$,
    \item if $n_i$ is defined then $S_k \in E_{i,1,n_i}$ and $\forall T \subseteq S_k(T \not\in \bigcup_x E_{i,0,x})$.
\end{enumerate}
Let $\sigma \in S_k$ be such that $m [S_k(\sigma)] > 0$ and
$$
    \exists u_i, v_i > k (u_i, v_i \in \dom \Phi_i(\sigma) \wedge R_{n_i}(u_i) \neq R_{n_i}(v_i))
$$
for all $i < k$ with $n_i$ defined. The existence of $\sigma$ is guaranteed by (3) above.

Let $S_{k+1} \subseteq S_k(\sigma)$ be such that $S_{k+1} \in E_{k,0,x_{k}}$ for some $x_k$, or $S_{k+1} \in E_{k,1,n_k}$ for some $n_k$ and there is no $T \subseteq S_{k+1}$ in $\bigcup_x E_{k,0,x}$. The existence of $S_{k+1}$ is guaranteed by Lemma \ref{lem:coh-rand-density}. Obviously, (1-3) above hold for $k+1$.

Clearly, every $X \in \bigcap_k [S_k]$ is as desired.
\end{proof}

\begin{remark}
By a relativization of Theorem 7 in Ku\v{c}era \cite{Kucera:85}, there exists $2$-random $X$ such that $\emptyset'' \leq_T X \oplus \emptyset' \leq_T X'$. Hence, there exists $2$-random $X$ which computes cohesive sets for all computable sequences, by Jockusch and Stephan \cite{Jockusch.Stephan:1993.cohesive}. So, it is natural to ask whether there is a $3$-random $X$ computing a cohesive set for the sequence in Theorem \ref{thm:coh-random}.
\end{remark}

\bibliographystyle{plain}

\end{document}